\documentclass[11pt]{amsart}

\usepackage{amsmath}
\usepackage{fullpage}
\usepackage{xspace}
\usepackage[psamsfonts]{amssymb}
\usepackage[latin1]{inputenc}
\usepackage{graphicx,color}
\usepackage[curve]{xypic} 
\usepackage{hyperref}
\usepackage{graphicx}

\usepackage{amsmath}%
\usepackage{amsthm}%
\usepackage{amscd}
\usepackage{amsfonts}%
\usepackage{amssymb}%
\usepackage{graphicx}

\usepackage{mathrsfs}

\usepackage{tikz}
\usetikzlibrary{matrix,arrows}

\usepackage{tikz-cd}

\newtheorem{theorem}{Theorem}[section]

\newtheorem{conjecture}[theorem]{Conjecture}

\newtheorem{corollary}[theorem]{Corollary}

\theoremstyle{remark}

\numberwithin{equation}{section}

\newcommand{\Pro}{\mathbb{P}}

\newcommand{\C}{\mathbb{C}}

\newcommand{\Q}{\mathbb{Q}}

\newcommand{\rk}{\mathrm{rank}\,}

\newcommand{\PGL}{\mathrm{PGL}}

\newcommand{\alg}{\mathrm{alg}}

\makeatletter
\@namedef{subjclassname@2020}{%
  \textup{2020} Mathematics Subject Classification}
\makeatother

  \DeclareFontFamily{U}{wncy}{}
    \DeclareFontShape{U}{wncy}{m}{n}{<->wncyr10}{}
    \DeclareSymbolFont{mcy}{U}{wncy}{m}{n}
    \DeclareMathSymbol{\Sha}{\mathord}{mcy}{"58}

\begin{document}
\title[Patterns on elliptic curves]{Patterns on elliptic curves beyond Bremner's conjecture}

\author{Natalia Garcia-Fritz}
\address{ Departamento de Matem\'aticas,
Pontificia Universidad Cat\'olica de Chile.
Facultad de Matem\'aticas,
4860 Av.\ Vicu\~na Mackenna,
Macul, RM, Chile}
\email[N. Garcia-Fritz]{natalia.garcia@uc.cl}%

\author{Hector Pasten}
\address{ Departamento de Matem\'aticas,
Pontificia Universidad Cat\'olica de Chile.
Facultad de Matem\'aticas,
4860 Av.\ Vicu\~na Mackenna,
Macul, RM, Chile}
\email[H. Pasten]{hector.pasten@uc.cl}%

\thanks{N.G.-F. was supported by ANID Fondecyt Regular grant 1251300 from Chile. H.P. was supported by ANID Fondecyt Regular grant 1230507 from Chile.}
\date{\today}
\subjclass[2020]{Primary: 11G05; Secondary: 11B25, 14G05} %
\keywords{Bremner's conjecture, elliptic curves, patterns, arithmetic progressions, geometric progressions}%

\begin{abstract} In the late 1990's, Bremner conjectured that long arithmetic progressions among the $x$-coordinates of rational points of an elliptic curve $E$ over $\mathbb{Q}$ should force the rank of $E$ to be large. This conjecture (and a broad generalization of it) was proved by the authors two decades later, by combining Nevanlinna theory and the Uniform Mordell--Lang theorem of Gao--Ge--K\"uhne. The proof inspired subsequent work by the authors where a generalization of the Bogomolov--Fu--Tschinkel conjecture was proved by similar means. In this note we isolate a flexible pattern principle implicit in the latter work, obtaining rank-dependent (but otherwise uniform) bounds for more general patterns in the image of finite rank subgroups of elliptic curves under maps to the projective line. These patterns include, for instance, arithmetic progressions, geometric progressions, additive shifts, multiplicative shifts, and M\"obius orbits.
\end{abstract}

\maketitle



\section{Introduction}

\subsection{Bremner's conjecture} In 1999, Bremner \cite{Bremner} conjectured that long arithmetic progressions in the $x$-coordinates of rational points of an elliptic curve $E$ over $\Q$ with a given Weierstrass equation, should force the rank of $E$ to be large. Beyond concrete numerical examples, some theoretical evidence was obtained in \cite{BST}. This conjecture was proved by the authors in Theorem 6.1 of \cite{GFP}:
\begin{theorem}[Proof of Bremner's conjecture] Let $d$ be a positive integer. There is a constant $c(d)>1$ depending only on $d$ such that the following holds: 

Let $E$ be an elliptic curve over $\C$ and $\Gamma$ a finite rank subgroup of $E(\C)$. Let $\phi:E\to \Pro^1$ be a degree $d$ morphism, seen as a rational function on $E$. If there are $P_1,...,P_n\in \Gamma$ not poles of $\phi$ such that $\phi(P_1),...,\phi(P_n)$ is a non-constant arithmetic progression in $\C$, then 
$$
n\le c(d)^{1+\rk \Gamma}.
$$
\end{theorem}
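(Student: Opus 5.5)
The strategy is to turn a long arithmetic progression into many points of $\Gamma^2$ lying on a fixed curve in $E\times E$, and then bound those points with the Uniform Mordell--Lang theorem of Gao--Ge--K\"uhne.

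\emph{Setup.} Write the progression as $\phi(P_i)=a+ib$ with $b\neq 0$. After the affine change $\phi\mapsto(\phi-a)/b$, which keeps the degree, we may assume $\phi(P_i)=i$ for $i=1,\dots,n$. Consecutive terms then satisfy $\phi(P_{i+1})-\phi(P_i)=1$. Consider the curve
$$
X=\{(x,y)\in E\times E:\ \phi(y)-\phi(x)=1\}\subset A=E\times E.
$$
This is a curve of bidegree $(d,d)$, cut out by the pullback of a divisor of class $\mathcal{O}(1,1)$ under $(\phi,\phi)$ to $\Pro^1\times\Pro^1$. The pairs $(P_i,P_{i+1})$, $i=1,\dots,n-1$, are $n-1$ distinct points of $X\cap\Gamma^2$, and $\Gamma^2$ has rank $2\,\rk\Gamma$.

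\emph{Main step.} We apply Uniform Mordell--Lang for curves in the abelian surface $A$. The polarization $\mathcal{O}_A(X)$ has degree bounded in terms of $d$, so each irreducible component $C$ of $X$ of geometric genus $\ge 2$ contains at most $c_1(d)^{1+2\rk\Gamma}$ points of $\Gamma^2$. It remains to handle the components of genus $\le 1$. Genus $0$ components do not occur, since $A$ contains no rational curves. A genus $1$ component is a translate of an elliptic curve $B\subset E\times E$, namely the graph of an isogeny composed with a translation, or a coset of $E\times\{pt\}$ or $\{pt\}\times E$.

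\emph{Main obstacle.} The hard part is excluding, or controlling, such elliptic components. Cosets $E\times\{q\}$ would force $\phi$ to be constant, which is impossible. Suppose instead $X$ contains $\{(x,\tau(x)): x\in E\}$ with $\tau=\alpha+t$ for an endomorphism $\alpha$. Then $\phi\circ\tau=\phi+1$ identically. Comparing degrees forces $\deg\alpha=1$, so $\tau$ is an automorphism of $E$. Iterating gives $\phi\circ\tau^k=\phi+k$. If $\tau$ had finite order $m$, we would get $m=0$, a contradiction. An automorphism of infinite order must be a translation by a non-torsion point $t$, and then $\phi$ would have infinitely many distinct poles $-kt+(\text{poles})$, again a contradiction. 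Hence every component of $X$ has genus $\ge2$, and the number of components is at most $d^2$ by intersection theory.

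\emph{Conclusion.} We obtain $n-1\le d^2c_1(d)^{1+2\rk\Gamma}$, which gives $n\le c(d)^{1+\rk\Gamma}$ after enlarging the constant. One subtlety is that Gao--Ge--K\"uhne is stated for curves of bounded degree with respect to a fixed polarization but with the ambient abelian variety varying. That is the case here, since $\dim A=2$ and the degree of $X$ with respect to $\mathcal{O}(1,1)$ is bounded by $O(d^2)$, so the uniformity is legitimate.
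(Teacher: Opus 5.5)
\textbf{There is a genuine gap in the genus step.} Your overall architecture matches the paper's. Your $X$ is the support of $f^*\Delta$ for $f=(\phi+1,\phi)\colon E\times E\to\Pro^1\times\Pro^1$, its components have degree bounded in terms of $d$, and Gao--Ge--K\"uhne finishes the proof once no component has geometric genus $\le 1$. Replacing the paper's Nevanlinna-theoretic step with the fact that such a component is a coset of an elliptic curve is legitimate. The problem is that your list of those cosets is incomplete.

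A one-dimensional abelian subvariety of $E\times E$ other than $E\times 0$ and $0\times E$ has the general form $B=\{(\alpha(z),\beta(z)) : z\in E'\}$ with $\alpha,\beta\colon E'\to E$ isogenies. It is the graph of a map $E\to E$ only when the projection $B\to E$ is an isomorphism. For instance, suppose $\mathrm{End}(E)=\mathbb{Z}[i]$ and take $B=\{((2+i)z,(2-i)z) : z\in E\}$. The parametrization is injective because $E[2+i]\cap E[2-i]=0$, and both projections have degree $5$. So your degree comparison yields nothing, and there is no self-map $\tau$ of $E$ to iterate. Your argument therefore does not exclude a component $\{(\alpha(z)+s,\beta(z)+t)\}$ satisfying $\phi\circ(\beta+t)=\phi\circ(\alpha+s)+1$ on $E'$.

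\textbf{The missing idea is the branching hypothesis that drives the paper's argument.} There, $g=\phi$ and $F\circ g$ with $F(u)=u+1$ have different sets of branch values. Since $\alpha+s$ and $\beta+t$ are \'etale and surjective, $\phi\circ(\alpha+s)$ and $\phi\circ(\beta+t)$ have exactly the same set $B_\phi$ of branch values as $\phi$. The functional equation then forces $B_\phi+1=B_\phi$, hence $B_\phi\subseteq\{\infty\}$. This contradicts Riemann--Hurwitz: a degree $d$ map $E\to\Pro^1$ has total ramification $2d$, while ramification over a single point is at most $d-1$.

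With this inserted, your algebraic route becomes a valid and more elementary substitute for the Nevanlinna step. The branch argument subsumes your graph case, and the cosets $E\times\{q\}$ and $\{q\}\times E$ are handled as you did. Your remaining steps are fine: the bidegree $(d,d)$ of $X$, the boundedly many components, $\rk\Gamma^2=2\rk\Gamma$, and the final adjustment of the constant.
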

Bremner's conjecture in its classical formulation is the case when $d=2$, $E$ is defined over $\Q$, $\Gamma=E(\Q)$, and $\phi$ is the $x$-coordinate map.

To be precise, the result in \cite{GFP} used a theorem of R\'emond \cite{Remond1, Remond2} that introduced a dependence on the $j$-invariant of $E$ and required $E$ to be defined over $\Q^{\alg}$. However, as in all applications of R\'emond's bound, this dependence was removed when the Uniform Mordell--Lang Conjecture was proved by Gao--Ge--K\"uhne in 2021 \cite{GGK} (over $\C$, not just $\Q^{\alg}$). See \cite{GFPunif} for a more detailed account.

\subsection{Other patterns} Our purpose is to prove extensions of Bremner's conjecture to other patterns, not just arithmetic progressions. The main point of this note is to isolate a useful pattern principle implicit in our work \cite{GFPinter} on the Bogomolov--Fu--Tschinkel conjecture, namely, Theorem \ref{ThmGeneralV1}. While the arguments here are short, the outcome goes far beyond Bremner's conjecture and it seems worth making it explicit in the literature (see Section \ref{SecResults}). For instance, we will obtain:
\begin{theorem}[M\"obius recurrences]\label{ThmIntro} Let $d$ be a positive integer. There is a constant $c(d)>1$ depending only on $d$ such that the following holds: 

Let $E$ be an elliptic curve over a number field $k$,  let $g:E\to \Pro^1$ be a non-constant morphism over $k$ of degree at most $d$, and let $F:\Pro^1\to \Pro^1$ be an automorphism over $k$ with no iterate equal to the identity (that is, $F\in \PGL_2(k)$ is not a torsion element). If $P_1,P_2,...,P_n\in E(k)$ are pairwise distinct points that satisfy $g(P_{j+1})=F(g(P_j))$ for each $j=1,2,...,n-1$, then 
$$
n\le c(d)^{1+\rk E(k)}.
$$
\end{theorem}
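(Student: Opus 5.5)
Plan: use the structure of $\PGL_2$ to turn the Möbius recurrence into one of two basic patterns, geometric progressions or arithmetic progressions, and then run the Nevanlinna / Uniform Mordell--Lang strategy of \cite{GFP}, \cite{GFPinter}.

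\textbf{Normalizing $F$.} Since $F\in\PGL_2(k)$ is non-torsion, it is not the identity. After conjugating by some $A\in\PGL_2(k')$, where $[k':k]\le 2$ so that the fixed points of $F$ become rational, $F$ takes one of two forms:
\begin{itemize}
\item two distinct fixed points: $F(z)=\lambda z$ with $\lambda\in k'^\times$ not a root of unity;
\item one fixed point: $F(z)=z+1$.
\end{itemize}
Replacing $g$ by $A\circ g$ keeps the degree $\le d$. Passing to $k'$ is harmless, because $\rk E(k')\le 2\rk E(k)+2\rk E'(k)$ for a quadratic twist $E'$. This is not good enough as it stands, so I will avoid it: the final bound only uses that the points lie in a finite rank subgroup $\Gamma\subset E(\C)$, and I take $\Gamma$ to be the group generated by $E(k)$. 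Then $\rk\Gamma=\rk E(k)$, and everything can be done over $\C$.

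Since the $P_j$ are distinct and $g(P_{j+1})=F(g(P_j))$, the values $x_j=g(P_j)$ satisfy $x_j=\lambda^{j-1}x_1$ (a geometric progression) or $x_j=x_1+j-1$ (an arithmetic progression). The values are not constant, except possibly when $x_1\in\{0,\infty\}$ is a fixed point. In that case all $P_j$ lie in the fiber $g^{-1}(x_1)$, which has at most $d$ points, so $n\le d$.

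\textbf{Arithmetic case.} This is exactly Theorem 1.1 of \cite{GFP}, applied with $\phi=A\circ g$. The degree of $\phi$ may be smaller than $d$, but taking the maximum of the constants over $\deg\le d$ handles that.

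\textbf{Geometric case.} Here I mimic the proof of Theorem 1.1, which is presumably the pattern principle (Theorem \ref{ThmGeneralV1}) the authors isolate. The key identity is that consecutive points satisfy $\phi(P_{j+1})=\lambda\phi(P_j)$. Consider the curves
$$
X_{s}=\{(P,Q)\in E\times E:\ \phi(Q)=\lambda^{s}\phi(P)\}
$$
for suitable shifts $s$. Each $X_s$ has bounded degree, with bidegree governed by $d$. The pairs $(P_j,P_{j+s})$ are points of $\Gamma\times\Gamma$ on $X_s$, and $\Gamma\times\Gamma$ has rank $2\rk\Gamma$.

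\begin{itemize}
\item Components of $X_s$ that are not translates of elliptic curves contain at most $c(d)^{1+\rk\Gamma}$ points of $\Gamma\times\Gamma$, by Uniform Mordell--Lang \cite{GGK}.
\item Components that are cosets of elliptic subgroups of $E\times E$ are the main obstacle. They correspond to relations $\phi\circ\tau=\lambda^s\phi$, where $\tau$ is of the form $P\mapsto \alpha P+b$ with $\alpha\in\mathrm{End}(E)$.
\end{itemize}

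\textbf{Main obstacle and how to handle it.} I will use Nevanlinna theory, or more simply a degree and ramification count, to bound how many $s$ can admit such a relation. Comparing degrees forces $\alpha$ to be an automorphism, so $\tau$ lies in the finite-by-translation group $\mathrm{Aut}(E)\ltimes E$. A translation $\tau$ with $\phi\circ\tau=\mu\phi$ permutes the zeros and poles of $\phi$. Hence the translations that occur lie in a subgroup of order bounded in terms of $d$, and $\mu$ ranges over a finite group of roots of unity of order bounded by $c(d)$.

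Since $\lambda$ is not a root of unity, $\lambda^s$ lies in that group only for $s=0$. So for $1\le s\le n$, the curve $X_s$ has no elliptic components. Choosing $s=1$ then bounds the number of pairs $(P_j,P_{j+1})$ by $c(d)^{1+2\rk\Gamma}$, which bounds $n$ after adjusting the constant.

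The real subtlety is making sure components of $X_1$ of genus one that are not cosets are also controlled; Uniform Mordell--Lang already covers these. The fiddly point is that the bound must depend on $d$ only and not on $\lambda$, and the argument above gives exactly that.
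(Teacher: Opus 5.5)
Your architecture matches the paper's, which goes through Theorem~\ref{ThmFiniteRankInter} with $g_1=g$ and $g_2=F\circ g$. In both, you bound the points of $\Gamma\times\Gamma$ on the bounded-degree curve $\{g(Q)=F(g(P))\}\subset E\times E$ by Uniform Mordell--Lang, once elliptic components are excluded.

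\textbf{The gap.} It lies in the step you identify as the main obstacle. You assume that an elliptic coset inside $X_s$ is a graph $\{(P,\alpha P+b)\}$ (or its transpose). In general such a coset is $\{(p_1(x)+b_1,\,p_2(x)+b_2): x\in B\}$, where $B\subset E\times E$ is an elliptic curve and $p_1,p_2\colon B\to E$ are the projections. These projections are isogenies because $\phi$ is non-constant. Comparing degrees only gives $\deg p_1=\deg p_2$, not that they are isomorphisms.

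For CM curves this is a real difference. If $\mathrm{End}(E)=\mathbb{Z}[i]$, then $B=\{((2-i)P,(2+i)P): P\in E\}$ is an elliptic subgroup of $E\times E$. Its two projections are degree-$5$ isogenies with distinct kernels, so $B$ is not the graph of a map $E\to E$ in either direction. For such cosets there is no self-map $\tau$ of $E$, so your zero/pole permutation argument has nothing to act on. As written, you have not excluded elliptic components of $X_1$ when $E$ has CM.

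A smaller omission: even for graphs you only treat translations. For $\tau(P)=\alpha P+b$ with $\alpha\neq 1$, you should pass to $\tau^m$, where $m$ is the order of $\alpha$; this is a translation.

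\textbf{The repair.} It is exactly the branching mechanism the paper relies on. Isogenies are unramified, so the function $x\mapsto\phi(p_2(x)+b_2)$ on $B$ has branch set $\mathrm{Br}(\phi)$. The function $x\mapsto\lambda\,\phi(p_1(x)+b_1)$ has branch set $\lambda\,\mathrm{Br}(\phi)$. An elliptic component would therefore force $\mathrm{Br}(\phi)=\lambda\,\mathrm{Br}(\phi)$. By Riemann--Hurwitz, $3\le\#\mathrm{Br}(\phi)\le 2d$, so there is $z\in\mathrm{Br}(\phi)\cap\C^\times$ with $\lambda^j z\in\mathrm{Br}(\phi)$ for all $j\ge 0$. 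Since $\mathrm{Br}(\phi)$ is finite, $\lambda$ is a root of unity, a contradiction.

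The same reasoning works for $g$ versus $F\circ g$ with any non-torsion $F\in\PGL_2(\C)$. This is because $\mathrm{Br}(F\circ g)=F(\mathrm{Br}(g))$, and an automorphism of $\Pro^1$ preserving a set of at least three points has finite order. This is precisely how the paper verifies the hypothesis of Theorem~\ref{ThmGeneralV1}. With this argument, the following parts of your proof become unnecessary:
\begin{itemize}
\item the normalization of $F$ and the passage to $k'$;
\item the separate appeal to \cite{GFP} for the arithmetic case;
\item the curves $X_s$ with $s\neq 1$.
\end{itemize}
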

Note that Bremner's conjecture follows from the special case when $k=\Q$ and $F$ has the form $F(t)=t+a$ for non-zero values of $a\in \Q$. On the other hand, taking $F(t)=qt$ for $q\in \Q-\{-1,0,1\}$ gives the analogue of Bremner's conjecture for geometric progressions (see \cite{BremnerUlas, CissMoody} for references on this problem), and so forth. Thus, while this theorem is not the most general result in this note, it already gives a broad generalization of Bremner's conjecture.

\subsection{Shifts} \label{SecIntroShifts} Beyond recurrences, there is another kind of pattern that we are able to study: \emph{shifts}. For the sake of exposition here we just state the case of elliptic curves over $\Q$ and the general case (which includes number fields) will be discussed in Section \ref{SecResultsShifts}.

\begin{theorem}[Additive shifts]\label{ThmAdditive} Let $d$ be a positive integer. There is a constant $\kappa(d)>1$ depending only on $d$ such that the following holds:

Let $E$ be an elliptic curve over $\Q$. Let $g:E\to \Pro^1$ be a non-constant morphism over $\Q$ of degree $\le d$ seen as a rational function on $E$. Let $a\in \Q^\times$. If $S\subseteq g(E(\Q))$ and $S+a\subseteq g(E(\Q))$, then
$$
\# S\le \kappa(d)^{1+\rk E(\Q)}.
$$
\end{theorem}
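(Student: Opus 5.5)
We reduce the statement to a uniform Mordell--Lang bound on the curve that parametrizes pairs of points whose $g$-values differ by $a$. Consider the fibre product
$$
C=\{(P,Q)\in E\times E:\ g(Q)=g(P)+a\},
$$
the preimage under $g\times g:E\times E\to\Pro^1\times\Pro^1$ of the graph of the translation $t\mapsto t+a$. Every $s\in S$ gives $P,Q\in E(\Q)$ with $g(P)=s$ and $g(Q)=s+a$. Hence $(P,Q)\in C(\Q)\cap\Gamma$, where $\Gamma=E(\Q)\times E(\Q)\subseteq (E\times E)(\C)$ has rank $2\,\rk E(\Q)$. Distinct $s$ give distinct points. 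Therefore $\#S\le \#(C(\C)\cap\Gamma)$, after discarding the finitely many $s$ for which $P$ or $Q$ is a pole of $g$, a number bounded by $2d$.

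The curve $C$ is a divisor of bidegree $(d',d')$ in $E\times E$, with $d'=\deg g\le d$. Its degree with respect to a fixed symmetric ample class, say $L=\mathcal{O}(O\times E+E\times O)$, is bounded in terms of $d$ alone: $C\cdot(O\times E)=C\cdot(E\times O)=d'$. We then apply the Uniform Mordell--Lang theorem of Gao--Ge--K\"uhne. For a subvariety $X$ of an abelian variety $A$ of dimension $2$ and $X$-degree $\le D$, the points of $X\cap\Gamma$ outside the union of positive-dimensional cosets contained in $X$ number at most $c(D)^{1+\rk\Gamma}$. With $\rk\Gamma=2\rk E(\Q)$, this gives the required shape $\kappa(d)^{1+\rk E(\Q)}$, provided we control the coset components.

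The main obstacle is showing that the irreducible components of $C$ that are translates of elliptic curves in $E\times E$ contribute boundedly, or else are absent. Such a component $B$ is a coset of a one-dimensional abelian subvariety: either $\{x\}\times E$, $E\times\{y\}$, or the graph-type coset $\{(P,\alpha(P)+R)\}$ for an isogeny-type map $\alpha$. Components of the first two kinds cannot lie in $C$, because $g$ is non-constant. For the third kind, we would get $g(\alpha(P)+R)=g(P)+a$ identically, with $\alpha$ an endomorphism; over $\Q$, $\alpha=[m]$ or a composite $[m]$ with $\pm$.

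Comparing degrees forces $\alpha$ to be an automorphism, so $\alpha=\pm1$. Then $\tau(P)=\pm P+R$ is an automorphism of $E$ of order $1$, $2$ or infinite satisfying $g\circ\tau=T_a\circ g$, where $T_a(t)=t+a$. Iterating gives $g\circ\tau^n=g+na$. If $\tau$ has finite order $m$, then $0=ma$, a contradiction. If $\tau(P)=P+R$ with $R$ non-torsion, then $g$ would take infinitely many values $g(P)+na$ on the orbit. That does not immediately contradict anything, so instead we argue as follows. A pole $P_0$ of $g$ is sent by $\tau$ to a pole, so the finite pole set is $\tau$-stable, forcing $R$ torsion, and hence $\tau$ has finite order. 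This is the contradiction.

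Thus $C$ contains no cosets, and the Gao--Ge--K\"uhne bound applies directly. The $C$ might be reducible, but it has at most $d^2$ components of bounded degree, and we apply the bound to each. The constant absorbing $2\rk$ and the component count yields $\kappa(d)$.
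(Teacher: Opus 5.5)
Your overall architecture matches the paper's: the paper deduces this from Theorem \ref{ThmFiniteRankInter} with $g_1=g$ and $g_2=g+a$, using the same curve $C\subseteq E\times E$, a degree bound, and Gao--Ge--K\"uhne. However, the one step with real content --- that no irreducible component of $C$ is a translate of an elliptic curve --- is not established by what you wrote, because your list of one-dimensional cosets is incomplete.

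A one-dimensional abelian subvariety of $E\times E$ is the image of $P\mapsto(\beta(P),\gamma(P))$ with $\beta,\gamma\in\mathrm{End}(E)$. In general neither projection is an isomorphism, so the coset need not be a graph $\{(P,\alpha(P)+R)\}$.

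Moreover, the components of $C$ are only defined over $\Q^{\alg}$, so the relevant ring is $\mathrm{End}_{\Q^{\alg}}(E)$. This is an imaginary quadratic order when $E$ has CM (e.g.\ $y^2=x^3-x$), so working ``over $\Q$'' does not reduce you to $\pm[m]$.

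For a coset $\{(\beta(P)+R_1,\gamma(P)+R_2)\}$, comparing degrees only yields $\deg\beta=\deg\gamma$, which does not force automorphisms. On $y^2=x^3-x$, the choice $\beta=2+i$, $\gamma=2-i$ gives an abelian subvariety isomorphic to $E$ whose two projections both have degree $5$. Without CM, writing $B=\{(mP,nP)\}$ with $\gcd(m,n)=1$, degree comparison does give $m,n=\pm1$, but you did not set this up either.

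The missing idea is exactly the paper's branching hypothesis (for $F(t)=t+a$, which has infinite order in $\PGL_2(\C)$), and here it can be used purely algebraically. Suppose $X\subseteq C$ is a translate of an elliptic curve. The projections $p_1,p_2\colon X\to E$ are non-constant, as you observed, hence translates of isogenies, hence \'etale and surjective. So $g\circ p_1$ and $g\circ p_2$ have precisely the branch values of $g$.

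Since $g\circ p_2=T_a\circ g\circ p_1$ on $X$, the finite set of branch values of $g$ is invariant under $t\mapsto t+a$. It is therefore contained in $\{\infty\}$, contradicting Riemann--Hurwitz: a map $E\to\Pro^1$ of degree $\ge 2$ has at least three branch values.

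This disposes of all cosets at once, including your translation and $\pm$ cases, so the pole and finite-order arguments become unnecessary. With this step in place, the rest of your argument goes through.
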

\begin{theorem}[Multiplicative shifts]\label{ThmMultiplicative} Let $d$ be a positive integer. There is a constant $\kappa(d)>1$ depending only on $d$ such that the following holds:

Let $E$ be an elliptic curve over $\Q$. Let $g:E\to \Pro^1$ be a non-constant morphism over $\Q$ of degree $\le d$ seen as a rational function on $E$. Let $q\in \Q-\{-1,0,1\}$. If $S\subseteq g(E(\Q))$ and $q\cdot S\subseteq g(E(\Q))$, then
$$
\# S\le \kappa(d)^{1+\rk E(\Q)}.
$$
\end{theorem}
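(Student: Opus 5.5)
Set $r=\rk E(\Q)$, $\Gamma=E(\Q)$, and $F(t)=qt$, so $F\in\PGL_2(\Q)$ is the class of $\mathrm{diag}(q,1)$. The plan is to reduce to counting rational points on an auxiliary curve that maps to $E\times E$, and then to bound those points uniformly by Uniform Mordell--Lang (Gao--Ge--K\"uhne), as in \cite{GFPinter}.

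\textbf{Step 1 (the incidence curve).} For each $s\in S$ pick $P_s\in\Gamma$ and $Q_s\in\Gamma$ with $g(P_s)=s$ and $g(Q_s)=qs$. Consider
$$
X=\{(P,Q)\in E\times E:\ g(Q)=F(g(P))\},
$$
the pullback of the graph of $F$ under $g\times g:E\times E\to\Pro^1\times\Pro^1$. Its class is $\deg g\cdot([\{pt\}\times E]+[E\times\{pt\}])$, so $X$ is a curve of bidegree at most $(d,d)$. Then $\#S\le\#(X\cap\Gamma^2)$, since distinct $s$ give distinct $P_s$; we may discard the at most $O(d)$ values $s\in\{0,\infty\}$ and the poles.

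\textbf{Step 2 (no elliptic components).} For every irreducible component $C$ of $X$ we need either that $C$ has geometric genus $\ge 2$, or that $C\cap\Gamma^2$ is small. Since $X$ has bidegree $\le(d,d)$, it has at most $2d$ components, each of bounded degree. A component of genus $\le1$ inside the abelian surface $E\times E$ must be a translate of an elliptic curve, i.e.\ a coset $\{(P,Q): aP=bQ+T\}$ with coprime integers $a,b$ bounded in terms of $d$, because the degrees $\le d$ bound $a^2,b^2$. Such a component gives a correspondence $g(Q)=qg(P)$ holding identically along a one-dimensional coset. Iterating it, together with the projection degrees, would produce infinitely many points with $g$-values $q^m s$. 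We then need to show that such a coset forces $F$ to be torsion in $\PGL_2$.

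The argument: the correspondence $\alpha=[b]^{-1}\circ\tau_T\circ[a]$ induces a map on $\Pro^1$ compatible with $g$. Comparing degrees, $\deg(g)\,a^2=\deg(g)\,b^2$ on the two sides of $g\circ\alpha=F\circ g$, so $a=\pm b$ and hence $a=b=\pm1$ by coprimality. Then $Q=\pm P+T$ and $g\circ\sigma=F\circ g$ for an automorphism $\sigma$ of $E$. The automorphism $F$ therefore normalizes the deck structure of $g$, which means $F$ permutes the finitely many branch values of $g$. There are at least $2$ branch values when $d\ge2$; for $d=1$ the map $g$ is constant, which is excluded. Any automorphism of $\Pro^1$ permuting a set of at least $3$ points has finite order. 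Branch values of $g$ number at least $3$ unless $g$ is unramified over two points, which is impossible for a map from a genus $1$ curve, since Riemann--Hurwitz gives ramification degree $2d\ge4$. Hence $F$ would be torsion. This contradicts $q\ne\pm1$: for $q\ne0,\pm1$, $\mathrm{diag}(q,1)$ has infinite order over $\Q$. The case $q=-1$ is exactly the one giving order $2$, which is why it is excluded.

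\textbf{Step 3 (counting).} Apply Uniform Mordell--Lang for curves of genus $\ge2$ of bounded degree in $E^2$: each such component $C$ satisfies $\#(C\cap\Gamma^2)\le c(d)^{1+\rk\Gamma^2}=c(d)^{1+2r}$. Summing over at most $2d$ components gives $\#S\le\kappa(d)^{1+r}$.

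The main obstacle is Step 2: excluding genus $\le1$ components (including rational ones, which cannot occur in $E\times E$) uniformly in $g$. I expect the branch-value/torsion argument sketched there to be the heart of the matter. The genus-$1$ case also covers non-reduced or multiple components, which have to be handled with some care.
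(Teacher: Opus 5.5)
Your architecture is the paper's. Up to swapping the factors, your $X$ is the support of $((F\circ g)\times g)^*\Delta$ that the paper feeds into Theorem \ref{ThmFiniteRankInter} (via Theorems \ref{ThmGeneralV1} and \ref{ThmMobiusV1}). You use the same degree bound and the same appeal to Gao--Ge--K\"uhne. You also close with the same observation: an element of $\PGL_2$ preserving the $\ge 3$ branch values of $g$ has finite order. The genuine difference is step (ii). The paper rules out components of geometric genus $\le 1$ by Nevanlinna theory, assuming only that $g$ and $F\circ g$ have different branch values. You instead use that such a component of an abelian surface is a translate of an elliptic curve. That algebraic route is sound and more elementary, but Step 2 as written has a hole.

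\textbf{The hole: CM curves.} You parametrize the elliptic curves in $E\times E$ by cosets $\{aP=bQ+T\}$ with coprime integers $a,b$, and then get $a=b=\pm 1$ from $a^2=b^2$. This assumes $\mathrm{End}(E)=\mathbb{Z}$. An elliptic curve over $\Q$ can have CM over $\Q^{\alg}$, for example $y^2=x^3-x$ with $\mathrm{End}(E)=\mathbb{Z}[i]$. The components of $X$ need only be defined over $\Q^{\alg}$. Then $E\times E$ contains elliptic curves such as the image of $R\mapsto((2+i)R,(2-i)R)$. There the degree comparison only gives $|2+i|^2=|2-i|^2$, the coefficients are coprime non-units, and no automorphism $\sigma$ exists.

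\textbf{The repair.} Skip the degree comparison. Write the component as the image of $R\mapsto(\phi_1(R)+T_1,\phi_2(R)+T_2)$, with $\phi_1,\phi_2:E'\to E$ homomorphisms from an elliptic curve $E'$. Both are nonzero, since otherwise $g$ would be constant. So they are isogenies, hence unramified in characteristic $0$. The identity $g\circ\tau_{T_2}\circ\phi_2=F\circ g\circ\tau_{T_1}\circ\phi_1$ therefore has branch values $B(g)$ on the left and $F(B(g))$ on the right. This gives $F(B(g))=B(g)$ directly, with no case analysis. The same argument proves the genus claim of Theorem \ref{ThmFiniteRankInter} in general without Nevanlinna theory.

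\textbf{Riemann--Hurwitz.} Also fix your Riemann--Hurwitz sentence. The total ramification is $2\deg g$, and each branch value absorbs at most $\deg g-1$ of it, which forces at least three branch values. The relevant condition is ``ramified over at most two points'', not ``unramified over two points''. And $\deg g=1$ is impossible, rather than making $g$ constant.
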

Here is an example of the shift phenomenon in the additive and multiplicative setting, which is not merely a single arithmetic or geometric progression:

\begin{proof}[Example] Consider the elliptic curve
$$
E:\quad y^2+xy=x^3-x^2-79x+289
$$
with LMFDB label 234446.a1 \cite{LMFDB}. The following values occur as $x$-coordinates of rational points on $E$:
$$
-10,-9,-8,-7,-4,0,1,3,4,5,6,7,8,12,13.
$$
Hence, for
$$
S=\{-10,-9,-8,0,3,4,5,6,7,12\},
$$
we have
$$
S+1=\{-9,-8,-7,1,4,5,6,7,8,13\} \subseteq x(E(\Q)).
$$
Thus $S\subseteq x(E(\Q))$ and $S+1\subseteq x(E(\Q))$, with $\#S=10$.

The same curve also gives a multiplicative-shift example. Namely,
$$
S=\{-4,3,4,6\}
$$
satisfies
$$
2\cdot S=\{-8,6,8,12\}\subseteq x(E(\mathbb Q)).
$$
It turns out that there is a remarkable aspect of this elliptic curve regarding its rank: it is the elliptic curve of smallest conductor having rank $4$.
\end{proof}

\subsection{Non-linear recurrences: a motivating example} Our results in Section \ref{SecResults} provide bounds for more general patterns, such as the one in the following example:

\begin{proof}[Example] Consider the elliptic curve
$$
E:\quad y^2+y=x^3-7x+6
$$
with LMFDB label 5077.a1 \cite{LMFDB}. Define the quadratic polynomial
$$
    F(t)=-\frac{1}{6}t^2-\frac{7}{6}t+2.
$$
Then we have the $F$-orbit segment
$$
0,\ 2,\ -1,\ 3,\ -3,\ 4
$$
namely, 
$$
F(0)=2,\quad
F(2)=-1,\quad
F(-1)=3,\quad
F(3)=-3,\quad
F(-3)=4.
$$
These $6$ numbers occur in $x(E(\Q))$, as we have the rational points
$$
\begin{array}{c|rrrrrr}
x & 0 & 2 & -1 & 3 & -3 & 4 \\ \hline
y & 2 & 0 & 3 & 3 & 0 & 6
\end{array}
$$
It is worth pointing out that $E$ is the elliptic curve with smallest conductor having rank $3$.
\end{proof}

In simple terms the general principle will be that, unless a pattern is directly related to the group structure of the elliptic curve, \emph{a long pattern implies large rank}.

\subsection{Outline of the rest of the paper} The rest of the article is organized as follows: first we will recall our work on the Bogomolov--Fu--Tschinkel conjecture \cite{GFPinter} and then we will remark that Bremner's conjecture, as well as its multiplicative analogue for geometric progressions, are just special cases of that work by choosing appropriate parameters. This will serve as a motivation for the other results of the present work, discussed in Section \ref{SecResults}. The main result is Theorem \ref{ThmGeneralV1}, that we call \emph{the pattern principle} as it is the underlying tool for all the other pattern bounds in this article.


\section{From Bogomolov--Fu--Tschinkel to Bremner}

\subsection{Intersecting values of rational functions on elliptic curves} In a series of works \cite{BF,BFT,BT}, Bogomolov, Fu, and Tschinkel formulated the following conjecture: 

\begin{conjecture} There is a constant $c$ with the following property: If $E_1$ and $E_2$ are elliptic curves defined over $\C$ given by Weierstrass equations $y^2=f_1(x)$ and $y^2=f_2(x)$ where $f_1$ and $f_2$ do not have the same roots, then
$$
\#x(E_1(\C)_{\rm tor})\cap x(E_2(\C)_{\rm tor}) \le c.
$$
\end{conjecture}

After an initial breakthrough in \cite{DMKY}, this conjecture was fully proved due to the proof of the Uniform Manin--Mumford Conjecture as noted in \cite{FuStoll}. We refer the reader to \cite{GFPinter} for a more detailed discussion of this problem. In the same paper \cite{GFPinter} we proved a generalization where the $x$-coordinate maps can be replaced by other rational functions, namely:

\begin{theorem} Let $d$ be a positive integer. There is a constant $c(d)>1$ such that the following holds:

If $E_1$ and $E_2$ are elliptic curves defined over $\C$ and $g_j:E_j\to \Pro^1$ are non-constant morphisms of degree $\le d$ that do not have the same set of branch values, then
$$
\#\left(g_1(E_1(\C)_{\rm tor})\cap g_2(E_2(\C)_{\rm tor})\right) \le c(d).
$$
\end{theorem}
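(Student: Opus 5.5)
The plan is to reduce the statement to a uniform bound on torsion points of a curve lying inside the abelian surface $E_1\times E_2$, and then invoke the Uniform Manin--Mumford theorem (the torsion case of the uniform bounds of Gao--Ge--K\"uhne, as used by DeMarco--Krieger--Ye and Fu--Stoll).

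\textbf{Step 1: the curve.} Let $A=E_1\times E_2$ and let $C\subseteq A$ be the fibre product $\{(P,Q): g_1(P)=g_2(Q)\}$, the preimage of the diagonal under $g_1\times g_2:A\to \Pro^1\times\Pro^1$. Its class is $\deg g_2\,[\{pt\}\times E_2]+\deg g_1\,[E_1\times\{pt\}]$, so $C$ has bounded degree with respect to a fixed polarization: $C\cdot L\le 2d^2$ for $L=\{0\}\times E_2+E_1\times\{0\}$. Its number of irreducible components is likewise at most about $d^2$. Each $t\in g_1(E_1(\C)_{\rm tor})\cap g_2(E_2(\C)_{\rm tor})$ gives at least one torsion point $(P,Q)\in C\cap A_{\rm tor}$ with $g_1(P)=g_2(Q)=t$. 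Distinct values $t$ give distinct points, so the intersection has size at most $\#(C\cap A_{\rm tor})$.

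\textbf{Step 2: remove components that are torsion cosets.} Uniform Manin--Mumford says the following. For a curve $X$ of degree $\le D$ in an abelian variety of dimension $g$, the torsion points of $X$ lying outside the union of torsion cosets of positive-dimensional abelian subvarieties contained in $X$ number at most $c(g,D)$. The components of $C$ are curves, so the only positive-dimensional torsion cosets they can contain are themselves, when a component is an elliptic curve $B+\tau$ with $B\subseteq A$ an elliptic subgroup and $\tau$ torsion. It therefore suffices to show that, under the hypothesis that $g_1,g_2$ have different sets of branch values, no component of $C$ is such a torsion coset. Granting this, we get a bound depending only on $d$, with $g=2$ and $D\le 2d^2$.

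\textbf{Step 3 (main obstacle): excluding coset components.} Suppose a component $Z=B+\tau$. The projections of $Z$ to $E_1$ and $E_2$ are non-constant; a constant projection would force $g_1$ or $g_2$ to be constant. Hence they are isogenies composed with translations, in particular unramified maps $\pi_j:Z\to E_j$, with $g_1\circ\pi_1=g_2\circ\pi_2=:h$ on $Z$. Since the $\pi_j$ are étale, the branch values of $h$ equal those of $g_1$ and also those of $g_2$. The reason is that a point of $Z$ is a ramification point of $h$ exactly when its image is a ramification point of $g_j$, and $\pi_j$ is surjective. Hence $g_1$ and $g_2$ have the same set of branch values, a contradiction.

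The delicate parts are the correct application of the uniform theorem to possibly reducible curves of bounded degree, which we will apply componentwise with degree bounded by $2d^2$. We must also check that the constant is genuinely independent of $E_1,E_2$. This independence is exactly what Gao--Ge--K\"uhne and K\"uhne provide over $\C$, for bounded-degree subvarieties of abelian varieties of bounded dimension with a fixed polarization type. A product of elliptic curves with the product principal polarization fits this setting.
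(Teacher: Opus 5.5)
Your proof is correct. It follows the paper's skeleton: the curve $C=(g_1\times g_2)^{-1}(\Delta)$ in $E_1\times E_2$, a degree bound, and the uniform theorem of Gao--Ge--K\"uhne applied componentwise. The genuine difference is how you obtain the key geometric input.

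The paper needs every irreducible component of $C$ to have geometric genus at least $2$, and gets this from Nevanlinna theory. A component of genus $0$ or $1$ would receive a non-constant holomorphic map from $\C$, and the branching hypothesis is used to rule such maps out. This method is inherited from the authors' proof of Bremner's conjecture, and the paper calls it the most delicate part of the argument.

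You instead note that, for Uniform Manin--Mumford, it suffices to exclude components that are cosets $B+\tau$ of elliptic curves, and you do this algebraically. The projections of such a coset to $E_1$ and $E_2$ are non-constant maps between genus-$1$ curves, hence \'etale by Riemann--Hurwitz. Precomposing with an \'etale surjection does not change the set of branch values, so $g_1$ and $g_2$ would have the same branch values as $h$, a contradiction.

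This is more elementary and loses nothing. You never use that $\tau$ is torsion, genus $0$ curves cannot lie in an abelian surface, and genus-$1$ curves there are cosets. So your argument shows every component of $C$ has geometric genus $\ge 2$. Combined with the finite-rank Uniform Mordell--Lang theorem, it therefore also yields Theorem~\ref{ThmFiniteRankInter} and hence the pattern principle. The analytic route is what the authors' general framework is built on, but for a curve in $E_1\times E_2$ your argument makes it unnecessary.

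One harmless slip: $(g_1\times g_2)^*\Delta$ has class $\deg g_1\,[\{\mathrm{pt}\}\times E_2]+\deg g_2\,[E_1\times\{\mathrm{pt}\}]$, whereas you swapped the coefficients. Its degree against your $L$ is therefore $\deg g_1+\deg g_2\le 2d$, so the bound $2d^2$ you use is still valid.
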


In \cite{GFPinter} we also prove an arithmetic counterpart:

\begin{theorem}\label{ThmInterk} Let $d$ be a positive integer. There is a constant $c(d)>1$ such that the following holds:

Let $k$ be a number field. If $E_1$ and $E_2$ are elliptic curves defined over $k$ and $g_j:E_j\to \Pro^1$ are non-constant morphisms of degree $\le d$ defined over $k$ that do not have the same set of (complex) branch values, then
$$
\#\left(g_1(E_1(k))\cap g_2(E_2(k))\right) \le c(d)^{1 + \rk E_1(k) + \rk E_2(k)}.
$$
\end{theorem}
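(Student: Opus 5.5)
The plan is to realize the intersection $g_1(E_1(k))\cap g_2(E_2(k))$ through rational points of the fibre product curve, bound those points by Uniform Mordell--Lang on a product of elliptic curves, and control the genus and non-degeneracy using the hypothesis on branch values. Consider the curve
$$
C=\{(P_1,P_2)\in E_1\times E_2 : g_1(P_1)=g_2(P_2)\},
$$
the fibre product of $g_1$ and $g_2$ over $\Pro^1$. Each value $t\in g_1(E_1(k))\cap g_2(E_2(k))$ other than $\infty$ gives at least one point $(P_1,P_2)\in C(k)$. Hence it suffices to bound the number of points of $C$, or of its irreducible components, lying in the finite rank subgroup $\Gamma=E_1(k)\times E_2(k)$ of $A=E_1\times E_2$. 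This group has rank $\rk E_1(k)+\rk E_2(k)$. The divisor $C$ is the pullback of the diagonal of $\Pro^1\times\Pro^1$ under $g_1\times g_2$, so it has bidegree bounded in terms of $d$. Its intersection number with a fixed ample class (say $E_1\times\{0\}+\{0\}\times E_2$) is therefore $O(d)$, and it has at most $O(d^2)$ irreducible components.

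Next, apply Uniform Mordell--Lang (Gao--Ge--K\"uhne) to each irreducible component $C'$ of $C$ that is not a translate of an elliptic subgroup of $A$. The number of points of $C'\cap\Gamma$ is then at most $c(\deg C')^{1+\rk\Gamma}$, and $\deg C'$ is bounded in terms of $d$. Summing over the $O(d^2)$ components gives the stated bound $c(d)^{1+\rk E_1(k)+\rk E_2(k)}$. Components of genus $0$ cannot occur inside an abelian variety, and a genus $\ge 2$ component is automatically not a coset.

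The main obstacle is the remaining case: a component $C'$ of genus one, that is, a translate of an elliptic curve $B\subset A$. Such a component could have infinitely many points in $\Gamma$. Here I would use the branch value hypothesis. If $C'$ were a coset of an elliptic curve $B$, then both projections $C'\to E_1$ and $C'\to E_2$ are finite morphisms of genus one curves. They are non-constant, since $C'$ is neither a horizontal nor a vertical fibre, because $g_1$ and $g_2$ are non-constant. Being morphisms between genus one curves, they are unramified (isogenies composed with translations). The composite $C'\to\Pro^1$, given by $g_1\circ\mathrm{pr}_1=g_2\circ\mathrm{pr}_2$, therefore has the same branch values as $g_1$ and as $g_2$. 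More carefully, the branch set of $g_i\circ \mathrm{pr}_i$ equals the branch set of $g_i$, because $\mathrm{pr}_i$ is surjective and étale. This forces $g_1$ and $g_2$ to have the same set of branch values, contradicting the hypothesis. So every component has genus $\ge 2$, and the argument above applies.

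One should double check that this branch-set comparison is exactly right. A branch value of $g_i$ is a point with some ramified preimage $Q$, and then any point of $C'$ over $Q$ is ramified for the composite, since $\mathrm{pr}_i$ is étale. The converse holds similarly. Finally, the single possible value $\infty$ adds at most $1$, which is absorbed into the constant.
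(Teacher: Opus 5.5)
Your argument is correct, and its skeleton matches the paper's. You use the curve $C$ given by the support of $(g_1\times g_2)^*\Delta$ in $A=E_1\times E_2$, the finite rank subgroup $\Gamma=E_1(k)\times E_2(k)$, a degree bound for the components of $C$, and the Uniform Mordell--Lang theorem of Gao--Ge--K\"uhne once every component is known to have geometric genus at least $2$.

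The genuine difference is in that last step, which the paper singles out as the delicate one. There it is obtained through Nevanlinna theory: one rules out non-constant holomorphic maps $h:\C\to C'$, which exist exactly when $C'$ has geometric genus $\le 1$, using the branching hypothesis. You instead exploit the ambient abelian surface algebraically:
\begin{itemize}
\item $A$ contains no rational curves.
\item A curve of geometric genus one in $A$ is a translate of an elliptic curve.
\item Its two projections are non-constant because $g_1,g_2$ are non-constant, so they are isogenies up to translation and hence \'etale.
\item Multiplicativity of ramification indices, together with surjectivity of the projections, gives
$$\mathrm{Br}(g_1)=\mathrm{Br}(g_1\circ \mathrm{pr}_1|_{C'})=\mathrm{Br}(g_2\circ \mathrm{pr}_2|_{C'})=\mathrm{Br}(g_2),$$
contradicting the hypothesis.
\end{itemize}

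This is shorter and entirely algebraic, with no analytic input. The analytic route is the one the authors carried over from their proof of Bremner's conjecture. It uses only the existence of an entire curve in $C'$ and extracts the branching constraint from value distribution theory. Your description of genus-one curves on $A$ instead makes the ramification comparison exact and elementary.

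Two harmless remarks:
\begin{itemize}
\item The number of components of $C$ is at most $\deg g_1+\deg g_2\le 2d$, since that is the degree of $C$ against $E_1\times\{0\}+\{0\}\times E_2$.
\item There is no need to set aside the value $\infty$, because $(\infty,\infty)$ lies on the diagonal.
\end{itemize}
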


\subsection{Arithmetic and geometric progressions} Let us consider the following special cases of Theorem \ref{ThmInterk}: take $k=\Q$,  $E:=E_1=E_2$ any elliptic curve over $\Q$, and choosing a Weierstrass equation for $E$ let $g_1=x$ be the $x$-coordinate map. Let $d=2$. Then take any of the following two choices of $g_2$:
\begin{itemize}
\item[(i)] For $a\in \Q^\times$ let $g_2 = x+a$
\item[(ii)] For $q\in \Q-\{0,-1,1\}$ take $g_2= qx$.
\end{itemize}

In case (i) we deduce Bremner's conjecture at once: a sequence $P_1,...,P_n\in E(\Q)$ with $x(P_j)= aj+b$ for some $b\in \Q$ satisfies that 
$$
x(P_{j+1}) = x(P_j)+a \in g_1(E(\Q))\cap g_2(E(\Q)) \quad \mbox{ for each }j=1,2,...,n-1. 
$$
Thus, $n \le 1+ c(2)^{1+ 2\rk E(\Q)}$. So, a long arithmetic progression on the $x$-coordinates of $E(\Q)$ forces $\rk E(\Q)$ to be large.

Case (ii) immediately gives the analogue of Bremner's conjecture for geometric progressions: any sequence $P_1,...,P_n\in E(\Q)$ with $x(P_j)= pq^j$ for some $p\in \Q^\times$ satisfies 
$$
x(P_{j+1}) = q\cdot x(P_j)\in g_1(E(\Q))\cap g_2(E(\Q)) \quad \mbox{ for each }j=1,2,...,n-1. 
$$
Thus, $n \le 1+ c(2)^{1+ 2\rk E(\Q)}$; hence, a long geometric progression in $x(E(\Q))$ forces $\rk E(\Q)$ to be large.

Of course there is nothing special about the $x$-coordinate map; any non-constant rational function on $E$ works as long as we keep the degree bounded, e.g. $y$-coordinates. After these examples, the proof of Theorem \ref{ThmIntro} is an exercise (in any case, it will follow from the more general results of Section \ref{SecResults}).

The special case (i) is not surprising. After all, our arguments in \cite{GFPinter} are modeled after our proof of Bremner's conjecture \cite{GFP}: Nevanlinna theory plus Uniform Mordell--Lang. Thus, this should be viewed as the same underlying method rather than as a genuinely different proof.

Regarding the case (ii) of geometric progressions, this particular instance of Theorem \ref{ThmInterk} was recently proved independently by Harrison--Mudgal--Schmidt in \cite{HMS}, using rather different methods. In the present framework it is a specialization of Theorem \ref{ThmInterk}, as explained above. We note that the applications in \cite{HMS} are not restricted to geometric progressions and also include an alternative proof of Bremner's conjecture; the techniques developed there are of independent interest.

\subsection{The case of finite rank subgroups} In \cite{GFPinter} we presented the main results specialized to two extremal cases: the torsion subgroup of a complex elliptic curve, and the Mordell--Weil group of an elliptic curve over a number field. Both cases had the same proof and, in fact, if instead of choosing a particular finite rank subgroup one makes no choice and simply keeps that level of generality (using the Uniform Mordell--Lang theorem of Gao--Ge--K\"uhne \cite{GGK} stated as Theorem 4.1 in \cite{GFPinter}) then one obtains:

\begin{theorem}\label{ThmFiniteRankInter} Let $d$ be a positive integer. There is a constant $c(d)$ such that the following holds:

Let $E_1$ and $E_2$ be elliptic curves defined over $\C$ and $g_j:E_j\to \Pro^1$ non-constant morphisms of degree $\le d$ that do not have the same set of branch values. Let $\Gamma_j\le E_j(\C)$ for $j=1,2$ be finite rank subgroups (not necessarily finitely generated). Then
$$
\#\left(g_1(\Gamma_1)\cap g_2(\Gamma_2)\right) \le c(d)^{1+\rk \Gamma_1 + \rk \Gamma_2}.
$$
\end{theorem}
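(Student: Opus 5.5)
The plan is to rerun the argument of \cite{GFPinter}, keeping the finite rank subgroups $\Gamma_1,\Gamma_2$ general instead of specializing to torsion or to Mordell--Weil groups. Consider the fibered product $C\subseteq E_1\times E_2$ cut out by $g_1(P_1)=g_2(P_2)$, that is, the preimage of the diagonal of $\Pro^1\times\Pro^1$ under $g_1\times g_2$. Each value $t\in g_1(\Gamma_1)\cap g_2(\Gamma_2)$ is hit by some point $(P_1,P_2)\in C\cap(\Gamma_1\times\Gamma_2)$, and distinct values come from distinct points. So it suffices to bound the number of points of $C$ lying in the finite rank subgroup $\Gamma=\Gamma_1\times\Gamma_2$ of the abelian surface $A=E_1\times E_2$. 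This group has $\rk\Gamma=\rk\Gamma_1+\rk\Gamma_2$.

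The first step is to control the geometry of $C$. The class of $C$ is $d_2[E_1\times\{pt\}]+d_1[\{pt\}\times E_2]$, where $d_j=\deg g_j\le d$. Hence $C$ has at most $d^2$ irreducible components, and its degree with respect to a fixed symmetric ample polarization (say the product of the $[0]$ divisors) is bounded by $O(d)$. The key geometric input, following \cite{GFPinter}, is that no irreducible component of $C$ is a translate of an elliptic curve in $A$, provided the branch value sets of $g_1$ and $g_2$ differ. I would prove this via Nevanlinna theory and Riemann--Hurwitz, in the same way as in the earlier work. Suppose a component $C_0$ were a coset $B+x$ of an elliptic curve $B\subset A$. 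Then both projections $C_0\to E_j$ are isogenies composed with translations, so they are unramified. The map $C_0\to\Pro^1$ obtained through $g_1$ therefore has branch values equal to those of $g_1$, and the same map obtained through $g_2$ has the branch values of $g_2$. These must coincide, which is a contradiction; a little care is needed for branch values that only partially appear along $C_0$. Components that are translates of $E_1\times\{pt\}$ or $\{pt\}\times E_2$ cannot occur, since both $g_j$ are non-constant.

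The second step applies the Uniform Mordell--Lang theorem of Gao--Ge--K\"uhne \cite{GGK}, in the form stated as Theorem 4.1 in \cite{GFPinter}, to each irreducible component $C_0$. Since $C_0$ is an irreducible curve of bounded degree in an abelian surface and is not a coset, the Zariski closure of $C_0\cap\Gamma$ consists only of points, and their number is at most $c'(d)^{1+\rk\Gamma}$. The constant depends only on $\dim A=2$ and on the degree of $C_0$, which is bounded in terms of $d$. Summing over the at most $d^2$ components gives the bound $c(d)^{1+\rk\Gamma_1+\rk\Gamma_2}$ after enlarging the constant.

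The main obstacle is the coset exclusion for components of $C$, particularly when $C$ is singular or reducible, which is exactly where the hypothesis on branch values enters. One must check that a coset component forces the branch value sets to agree, and not merely to be related through partial ramification. I would first handle this by passing to the normalization and comparing the ramification of the two composite maps, exactly as in \cite{GFPinter}. Two further minor points must be verified: that the polarization degree is uniformly bounded, and that Theorem 4.1 of \cite{GFPinter} applies to non-finitely generated $\Gamma$. The latter is the version in \cite{GGK}, with the complex case handled by specialization.
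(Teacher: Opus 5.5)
Your proposal is correct. Its architecture is the same as the paper's: the curve $C=(g_1\times g_2)^{-1}(\Delta)$ in $E_1\times E_2$, the reduction to $\#(C\cap\Gamma)$, a degree bound, then Gao--Ge--K\"uhne applied componentwise. Where you genuinely differ is the key geometric step.

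\textbf{The paper's route.} It shows that every irreducible component of $C$ has geometric genus at least $2$ by Nevanlinna theory. It uses that a complex projective curve has genus $0$ or $1$ exactly when it receives a non-constant holomorphic map from $\C$, and the branching hypothesis enters there.

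\textbf{Your route.} Although you say you would follow that analytic route, the argument you actually sketch is purely algebraic. An abelian variety contains no rational curves, and a morphism from an elliptic curve to $A$ is a translated homomorphism. So a component of geometric genus $\le 1$ is exactly a translate $B+x$ of an elliptic curve. This is also precisely the exceptional locus in the Gao--Ge--K\"uhne statement. Such a translate is excluded because each $\pi_j|_{B+x}$ is either constant or a translated isogeny. Constant is impossible since $g_j$ is non-constant. A translated isogeny is finite \'etale and surjective, so the single map $g_1\circ\pi_1=g_2\circ\pi_2$ on $B+x$ has exactly the branch values of $g_1$ and also exactly those of $g_2$, contradicting the hypothesis.

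\textbf{The flagged obstacle does not arise.} Your argument is already complete. $B+x$ is itself smooth, so singularities of $C$ or intersections with other components are irrelevant, and no normalization is needed. Surjectivity and \'etaleness give
\[
\#\,(g_j\circ\pi_j)^{-1}(t)=\deg(\pi_j|_{B+x})\cdot\#\,g_j^{-1}(t)
\]
for every $t$, so there is no ``partial'' appearance of branch values to worry about.

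\textbf{What each route buys.} Yours gives an elementary, self-contained proof of the genus condition in this product setting, with no Nevanlinna theory. The paper's analytic route is the uniform method inherited from its proof of Bremner's conjecture. It certifies genus $\ge 2$ directly, without having to identify the low-genus components as cosets.

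\textbf{Minor point.} The clean component count is $L\cdot C=d_1+d_2\le 2d$ for $L=[\{0\}\times E_2]+[E_1\times\{0\}]$, since each component has positive $L$-degree. Your bound of $d^2$ holds only because $d_j\ge 2$ automatically for a non-constant map $E_j\to\Pro^1$, and it does not follow from the class computation as your ``hence'' suggests.
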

The proof is the same as the arguments presented in \cite{GFPinter} which, for the convenience of the reader, will be outlined here:

Consider the abelian surface $A=E_1\times E_2$ and the map $f:A\to \Pro^1\times\Pro^1$ given by $f(P,Q)=(g_1(P),g_2(Q))$. Let $C\subseteq A$ be the $1$-dimensional subvariety given by the support of $f^*\Delta$ where $\Delta$ is the diagonal of  $\Pro^1\times\Pro^1$. Let $\Gamma = \Gamma_1\times\Gamma_2$; this is a finite rank subgroup of $A(\C)$. The key observation is that, in order to bound
$$
\#\left(g_1(\Gamma_1)\cap g_2(\Gamma_2)\right)
$$
it suffices to bound 
$$
\#\left(C\cap \Gamma \right).
$$
Such a bound follows at once from the Uniform Mordell--Lang Theorem of Gao--Ge--K\"uhne \cite{GGK} provided that
\begin{itemize}
\item[(i)] One bounds the degree of each component of $C$ with respect to an ample line bundle, and
\item[(ii)] One shows that each irreducible component of $C$ is a curve of geometric genus at least $2$.
\end{itemize}

The first requirement is an intersection-theoretic computation. 

The second one is achieved via Nevanlinna theory of complex holomorphic maps, because a projective complex curve $X$ has geometric genus $0$ or $1$ precisely when there is a non-constant complex holomorphic map $h:\C\to X$. This is the most delicate part of the argument and it is here where the branching hypothesis is used.

With all these elements in place, Theorem \ref{ThmFiniteRankInter} follows.


\section{Main Results}\label{SecResults}

\subsection{The pattern principle} Here is the main result of this note.

\begin{theorem}[The pattern principle]\label{ThmGeneralV1} Let $d$ be a positive integer. There is a constant $\kappa_1(d)>1$ depending only on $d$ such that the following holds:

Let $E$ be an elliptic curve over $\C$. Let $g:E\to \Pro^1$ and $F:\Pro^1\to\Pro^1$ be non-constant morphisms of degree $\le d$ such that $g$ and $F\circ g$ do not have the same set of branch values. Let $\Gamma$ be a finite rank subgroup of $E(\C)$. If $S\subseteq g(\Gamma)$ and $F(S)\subseteq g(\Gamma)$, then
$$
\# S\le \kappa_1(d)^{1+\rk \Gamma}.
$$
\end{theorem}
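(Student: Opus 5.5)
The plan is to reduce the statement to Theorem \ref{ThmFiniteRankInter} with $E_1=E_2=E$, $\Gamma_1=\Gamma_2=\Gamma$, $g_1=g$ and $g_2=F\circ g$. The composite $F\circ g$ is a non-constant morphism of degree $\deg F\cdot\deg g\le d^2$, and by hypothesis $g$ and $F\circ g$ do not have the same set of branch values. So Theorem \ref{ThmFiniteRankInter}, applied with degree bound $d^2$, gives
$$
\#\left(g(\Gamma)\cap F(g(\Gamma))\right)\le c(d^2)^{1+2\rk\Gamma}.
$$

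Next we transfer this bound to $S$. If $S\subseteq g(\Gamma)$ and $F(S)\subseteq g(\Gamma)$, then $F(S)\subseteq g(\Gamma)\cap F(g(\Gamma))$, since every element of $F(S)$ has the form $F(g(P))$ with $P\in\Gamma$. The map $F$ has degree at most $d$, so it is at most $d$-to-one on $\Pro^1$, and hence $\#S\le d\cdot\#F(S)$. Combining the two estimates,
$$
\#S\le d\cdot c(d^2)^{1+2\rk\Gamma}\le \kappa_1(d)^{1+\rk\Gamma}
$$
with $\kappa_1(d):=d\cdot c(d^2)^2$, which is $>1$ because $c(d^2)>1$. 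This uses $1+2r\le 2(1+r)$ and $d\le d^{1+r}$.

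The main obstacle is confirming that Theorem \ref{ThmFiniteRankInter} really applies when $E_1=E_2$ and $\Gamma_1=\Gamma_2$. Its statement allows any pair of complex elliptic curves, so equal curves are covered. The only genuine geometric input is the genus $\ge 2$ of the components of the curve $C\subseteq E\times E$, the support of the pullback of the diagonal under $(g,F\circ g)$. The branch-value hypothesis is designed to guarantee exactly this.

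If one prefers to rederive the result rather than cite it, I would run the outline given after Theorem \ref{ThmFiniteRankInter} directly, in three steps:
\begin{itemize}
\item[(1)] Define $C=\{(P,Q): g(P)=F(g(Q))\}\subseteq E\times E$ and note that every $s\in S$ yields a point $(P,Q)\in C\cap(\Gamma\times\Gamma)$ with $g(Q)=s$.
\item[(2)] Bound the degree of the components of $C$ in terms of $d$ by intersection theory.
\item[(3)] Use Nevanlinna theory with the non-equal branch sets to exclude non-constant entire curves in $C$.
\end{itemize}
Then Gao--Ge--K\"uhne bounds $\#(C\cap\Gamma^2)$ by $c^{1+2\rk\Gamma}$, and projecting to $s=g(Q)$ loses nothing more than the above. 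Step (3) is where I expect the real difficulty, but it is already contained in Theorem \ref{ThmFiniteRankInter}.
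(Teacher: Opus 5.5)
Your proposal is correct and is essentially the paper's own proof: apply Theorem \ref{ThmFiniteRankInter} with $E_1=E_2=E$, $\Gamma_1=\Gamma_2=\Gamma$, $g_1=g$, $g_2=F\circ g$ (degree $\le d^2$), observe that $F(S)\subseteq g(\Gamma)\cap (F\circ g)(\Gamma)$, and conclude using $\#F(S)\ge \#S/d$. Your explicit choice $\kappa_1(d)=d\cdot c(d^2)^2$ is valid, provided you take $c(d^2)>1$, which you may assume without loss of generality.
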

\begin{proof} This follows from Theorem \ref{ThmFiniteRankInter} by taking $E_1=E_2=E$, $\Gamma_1=\Gamma_2=\Gamma$, $g_1=g$, $g_2=F\circ g$ (which has degree $\le d^2$), and observing that 
$$
F(S) \subseteq  g_1(\Gamma)\cap g_2(\Gamma).
$$
From here the desired bound follows from $\# F(S)\ge (\#S)/d$. 
\end{proof}

Before any further analysis, let us immediately point out that the branching hypothesis is necessary. For instance, one could take $g:E\to \Pro^1$ as the $x$-coordinate map for a fixed Weierstrass equation and $F$ the corresponding Latt\`es map for duplication on $E$. Then $F(x(\Gamma)) = x([2](\Gamma))\subseteq x(\Gamma)$.

As an application of the pattern principle, one has control on the patterns that arise from certain recurrences.

\begin{theorem}[Bounding patterns: recurrences]\label{ThmGeneralV2} Let $d$ be a positive integer. There is a constant $\kappa_2(d)>1$ depending only on $d$ such that the following holds:

Let $E$ be an elliptic curve over $\C$. Let $g:E\to \Pro^1$ and $F:\Pro^1\to\Pro^1$ be non-constant morphisms of degree $\le d$ such that $g$ and $F\circ g$ do not have the same set of branch values. Let $\Gamma$ be a finite rank subgroup of $E(\C)$. Let $n\ge 1$, $\alpha_1\in \Pro^1(\C)$, and for each $j=1,...,n-1$ define $\alpha_{j+1} = F(\alpha_j)$. Suppose that all the $\alpha_j$ are distinct and belong to $g(\Gamma)$. Then 
$$
n\le \kappa_2(d)^{1+\rk \Gamma}.
$$
\end{theorem}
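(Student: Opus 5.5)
We deduce Theorem \ref{ThmGeneralV2} directly from the pattern principle, Theorem \ref{ThmGeneralV1}, applied to the set $S=\{\alpha_1,\dots,\alpha_{n-1}\}$.

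\emph{Verifying the hypotheses.} By assumption every $\alpha_j$ lies in $g(\Gamma)$, so $S\subseteq g(\Gamma)$. Since $\alpha_{j+1}=F(\alpha_j)$ for $j=1,\dots,n-1$, we have
$$
F(S)=\{\alpha_2,\dots,\alpha_n\}\subseteq g(\Gamma).
$$
The remaining hypotheses of Theorem \ref{ThmGeneralV1} coincide with those of the present statement. These are that $E$ is a complex elliptic curve, that $g$ and $F$ are non-constant of degree $\le d$, that $g$ and $F\circ g$ do not have the same set of branch values, and that $\Gamma$ has finite rank.

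\emph{Counting.} Because the $\alpha_j$ are pairwise distinct, $\#S=n-1$. Theorem \ref{ThmGeneralV1} then gives
$$
n-1\le \kappa_1(d)^{1+\rk\Gamma},
$$
and hence $n\le 1+\kappa_1(d)^{1+\rk\Gamma}$.

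\emph{Absorbing the constant.} Since $\kappa_1(d)>1$ and $1+\rk\Gamma\ge 1$, we have
$$
1+\kappa_1(d)^{1+\rk\Gamma}\le 2\kappa_1(d)^{1+\rk\Gamma}\le \bigl(2\kappa_1(d)\bigr)^{1+\rk\Gamma}.
$$
So we may take $\kappa_2(d)=2\kappa_1(d)$, which depends only on $d$ and exceeds $1$. The case $n=1$ is trivial because $\kappa_2(d)>1$.

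There is no real obstacle here. The only point that needs care is the use of distinctness: it is what guarantees $\#S=n-1$. (Even without it, Theorem \ref{ThmGeneralV1} would bound $\#S$ rather than $n$, so distinctness is exactly what converts that bound into a bound on the orbit length.) All of the substantive content, namely the genus and degree analysis together with Uniform Mordell--Lang, is already contained in Theorem \ref{ThmGeneralV1}.
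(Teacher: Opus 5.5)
Your proposal is correct and follows the paper's argument exactly. The paper's proof is the one-line instruction to apply Theorem \ref{ThmGeneralV1} to $S=\{\alpha_1,\dots,\alpha_{n-1}\}$. You have filled in the routine details: $F(S)\subseteq g(\Gamma)$, $\#S=n-1$ by distinctness, and the absorption $1+\kappa_1(d)^{1+\rk\Gamma}\le (2\kappa_1(d))^{1+\rk\Gamma}$.
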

\begin{proof} Take $S=\{\alpha_1,...,\alpha_{n-1}\}$ in Theorem \ref{ThmGeneralV1}.
\end{proof}

Note that these results are, in particular, applicable to the case $\Gamma = E(\C)_{\rm tor}$ (here the rank is $0$ so the bounds become uniform) as well as when everything is defined over a number field and $\Gamma$ is the Mordell--Weil group of the elliptic curve.

\subsection{The case of M\"obius transformations}  There is a case of particular interest where the branching hypothesis is easily seen to be satisfied: when $F$ is an automorphism of $\Pro^1$ of infinite order in $\PGL_2(\C)$. Indeed, any non-constant map $g:E\to \Pro^1$ is branched on at least $3$ values in $\Pro^1$, and an $F\in \PGL_2(\C)$ that permutes them has finite order. Let us state the corresponding two consequences in this setting.

\begin{theorem}[The M\"obius case: invariant sets]\label{ThmMobiusV1} Let $d$ be a positive integer. There is a constant $\kappa_1(d)>1$ depending only on $d$ such that the following holds:

Let $E$ be an elliptic curve over $\C$. Let $g:E\to \Pro^1$ be a non-constant morphism of degree $\le d$ and let $F \in \PGL_2(\C)$ be an automorphism of $\Pro^1$ of infinite order.  Let $\Gamma$ be a finite rank subgroup of $E(\C)$. If $S\subseteq g(\Gamma)$ and $F(S)\subseteq g(\Gamma)$, then
$$
\# S\le \kappa_1(d)^{1+\rk \Gamma}.
$$
\end{theorem}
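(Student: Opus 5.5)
The plan is to deduce the statement directly from the pattern principle, Theorem \ref{ThmGeneralV1}, with the same $g$, $\Gamma$ and $S$. The only thing to check is its hypothesis: that $g$ and $F\circ g$ do not have the same set of branch values. Once this holds, the conclusion $\#S\le \kappa_1(d)^{1+\rk\Gamma}$ follows with the same constant $\kappa_1(d)$. Since $\deg F=1\le d$, the degree hypothesis of Theorem \ref{ThmGeneralV1} is satisfied.

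First, I compare branch values. Because $F$ is an automorphism of $\Pro^1$, the map $F\circ g$ has the same ramification points on $E$ as $g$, with the same indices. Hence the branch values of $F\circ g$ are exactly $F(B)$, where $B\subseteq\Pro^1$ is the set of branch values of $g$. So the hypothesis fails precisely when $F(B)=B$, that is, when $F$ permutes the finite set $B$. I will argue by contradiction and assume $F(B)=B$.

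Second, I show that $\#B\ge 3$. Let $\deg g=m$. If $m=1$, then $g$ would be an isomorphism $E\cong\Pro^1$, which is impossible since $E$ has genus $1$; so $m\ge 2$. By Riemann--Hurwitz,
$$0=2g(E)-2 = m(-2)+\sum_{P}(e_P-1),$$
so the total ramification is $2m$. Over a single branch value $b$, the contribution is $\sum_{P\mapsto b}(e_P-1)=m-\#g^{-1}(b)\le m-1$. Thus $\#B\cdot(m-1)\ge 2m>2(m-1)$, which forces $\#B\ge 3$.

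Third, I derive the contradiction. Since $F$ permutes $B$, some power $F^N$ with $N\ge 1$ (for instance $N=(\#B)!$) fixes at least three distinct points of $\Pro^1$. A M\"obius transformation fixing three distinct points is the identity, because it is determined by the images of three points. So $F^N=\mathrm{id}$, which contradicts the assumption that $F$ has infinite order in $\PGL_2(\C)$. Therefore the branch value sets of $g$ and $F\circ g$ differ, and Theorem \ref{ThmGeneralV1} applies.

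The only step with any content is the Riemann--Hurwitz count guaranteeing $\#B\ge 3$; the rest is formal.
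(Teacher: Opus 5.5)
Your proposal is correct and follows the paper's argument. The paper likewise deduces this from Theorem \ref{ThmGeneralV1} by noting that $g$ has at least three branch values and that a M\"obius transformation permuting them has finite order; you add the Riemann--Hurwitz count and the fixed-point argument that the paper leaves implicit.
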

\begin{theorem}[The M\"obius case: orbits]\label{ThmMobiusV2} Let $d$ be a positive integer. There is a constant $\kappa_2(d)>1$ depending only on $d$ such that the following holds:

Let $E$ be an elliptic curve over $\C$. Let $g:E\to \Pro^1$ be a non-constant morphism of degree $\le d$ and let $F \in \PGL_2(\C)$ be an automorphism of $\Pro^1$ of infinite order.  Let $\Gamma$ be a finite rank subgroup of $E(\C)$. Let $n\ge 1$, $\alpha_1\in \Pro^1(\C)$, and for each $j=1,...,n-1$ define $\alpha_{j+1} = F(\alpha_j)$. Suppose that all the $\alpha_j$ are distinct and belong to $g(\Gamma)$. Then 
$$
n\le \kappa_2(d)^{1+\rk \Gamma}.
$$
\end{theorem}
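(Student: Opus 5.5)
The plan is to deduce Theorem \ref{ThmMobiusV2} directly from Theorem \ref{ThmGeneralV2}, in the same way Theorem \ref{ThmMobiusV1} follows from Theorem \ref{ThmGeneralV1}. Since $F\in\PGL_2(\C)$ has degree $1\le d$, all hypotheses of Theorem \ref{ThmGeneralV2} hold except possibly one. That remaining hypothesis is that $g$ and $F\circ g$ do not have the same set of branch values. Granting it, Theorem \ref{ThmGeneralV2} gives $n\le \kappa_2(d)^{1+\rk\Gamma}$ with the same constant $\kappa_2(d)$.

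So the only real step is to verify the branching condition. Let $B\subseteq\Pro^1$ be the set of branch values of $g$. Because $F$ is an automorphism, the branch values of $F\circ g$ are exactly $F(B)$. If the two sets coincided, then $F(B)=B$, so $F$ would permute the finite set $B$.

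Next we check that $\#B\ge 3$. Apply Riemann--Hurwitz to $g:E\to\Pro^1$ of degree $m\ge 1$:
$$
0 = 2g(E)-2 = m(-2) + \sum_{P\in E}(e_P-1),
$$
so the total ramification equals $2m$. If $m=1$, then $g$ would be an isomorphism $E\cong\Pro^1$, which is impossible, so $m\ge2$. Each fibre contributes at most $m-1$ to the total ramification. If there were at most two branch values, the total would be at most $2(m-1)<2m$, a contradiction. Hence $\#B\ge 3$.

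Now suppose $F$ permutes $B$. Some power $F^{r}$ with $r\le (\#B)!$ then fixes three distinct points of $\Pro^1$. A Möbius transformation fixing three distinct points is the identity, so $F^r=\mathrm{id}$. This contradicts the assumption that $F$ has infinite order. Therefore the branch value sets of $g$ and $F\circ g$ differ, and Theorem \ref{ThmGeneralV2} applies.

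There is no substantial obstacle. The only point needing care is the Riemann--Hurwitz count showing that $g$ has at least three branch values, and this is where the genus-one hypothesis on $E$ enters.
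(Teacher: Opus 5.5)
Your proposal is correct and matches the paper's route: reduce to Theorem \ref{ThmGeneralV2} by observing that $F\circ g$ has branch values $F(B)$, that any non-constant $g:E\to\Pro^1$ has at least three branch values, and that a Möbius transformation permuting such a set has finite order. Your Riemann--Hurwitz count simply fills in the "at least three branch values" fact, which the paper states without proof.
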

Note that Theorem \ref{ThmIntro} is a special case of Theorem \ref{ThmMobiusV2}. 
\subsection{Addition and multiplication}\label{SecResultsShifts} By a suitable choice of $F$ in Theorem \ref{ThmMobiusV2} we recover Bremner's conjecture for arithmetic progressions and geometric progressions in a very general form. But actually Theorem \ref{ThmMobiusV1} gives more:

\begin{corollary}[Additive shifts]\label{CoroAdditive} Let $d$ be a positive integer. There is a constant $\kappa_1(d)>1$ depending only on $d$ such that the following holds:

Let $E$ be an elliptic curve over $\C$. Let $g:E\to \Pro^1$ be a non-constant morphism of degree $\le d$ seen as a rational function on $E$. Let $a\in \C^\times$.   Let $\Gamma$ be a finite rank subgroup of $E(\C)$. If $S\subseteq g(\Gamma)$ and $S+a\subseteq g(\Gamma)$, then
$$
\# S\le \kappa_1(d)^{1+\rk \Gamma}.
$$
\end{corollary}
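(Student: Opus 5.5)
We will deduce Corollary \ref{CoroAdditive} directly from Theorem \ref{ThmMobiusV1}, applied with the translation
$$
F:\Pro^1\to\Pro^1,\qquad F(t)=t+a \quad (F(\infty)=\infty).
$$
This $F$ lies in $\PGL_2(\C)$ and is represented by the matrix $\begin{pmatrix}1&a\\0&1\end{pmatrix}$.

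The first step is to check that $F$ has infinite order in $\PGL_2(\C)$. Its $m$-th iterate is $F^{m}(t)=t+ma$. Since $a\neq 0$ and $\C$ has characteristic zero, $ma\ne 0$ for every $m\ge 1$, so $F^m$ is never the identity. Equivalently, $\begin{pmatrix}1&ma\\0&1\end{pmatrix}$ is not a scalar matrix.

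The second step is to match the hypotheses. We have $S\subseteq g(\Gamma)$ by assumption. In the corollary, $S$ consists of values of $g$ at points of $\Gamma$; if some of these values equal $\infty$, we read $S+a$ as $F(S)$, using the convention $\infty+a=\infty$. Then the hypothesis $S+a\subseteq g(\Gamma)$ says exactly that $F(S)\subseteq g(\Gamma)$. Theorem \ref{ThmMobiusV1}, applied with the same $d$, $E$, $g$ and $\Gamma$, now gives
$$
\#S\le \kappa_1(d)^{1+\rk\Gamma},
$$
with the constant $\kappa_1(d)$ from that theorem.

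There is no substantial obstacle. The only points needing care are the convention at $\infty$ and the observation that the degree bound on $g$ carries over unchanged, since $F$ has degree $1\le d$. All the real content is in Theorem \ref{ThmMobiusV1}, which rests on the branching argument via Theorem \ref{ThmGeneralV1}.
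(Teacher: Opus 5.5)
Your proposal is correct and follows the paper's argument: the corollary comes from applying Theorem \ref{ThmMobiusV1} to the translation $F(t)=t+a$, which has infinite order in $\PGL_2(\C)$ because $F^m(t)=t+ma$ and $ma\neq 0$ for every $m\ge 1$. Your convention $\infty+a=\infty$ is a harmless bookkeeping detail that the paper leaves implicit.
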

\begin{corollary}[Multiplicative shifts]\label{CoroMultiplicative} Let $d$ be a positive integer. There is a constant $\kappa_1(d)>1$ depending only on $d$ such that the following holds:

Let $E$ be an elliptic curve over $\C$. Let $g:E\to \Pro^1$ be a non-constant morphism of degree $\le d$ seen as a rational function on $E$. Let $q\in \C^\times$ be a complex number which is not a root of unity. Let $\Gamma$ be a finite rank subgroup of $E(\C)$. If $S\subseteq g(\Gamma)$ and $q\cdot S\subseteq g(\Gamma)$, then
$$
\# S\le \kappa_1(d)^{1+\rk \Gamma}.
$$
\end{corollary}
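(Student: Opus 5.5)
The plan is to deduce Corollary \ref{CoroMultiplicative} directly from Theorem \ref{ThmMobiusV1}, taking the M\"obius transformation $F(t)=qt$, that is, the class of $\begin{pmatrix} q & 0\\ 0 & 1\end{pmatrix}$ in $\PGL_2(\C)$. This $F$ fixes $0$ and $\infty$, and it acts on $\Pro^1(\C)$ by $F([x:y])=[qx:y]$, so on $\C\subseteq \Pro^1(\C)$ it is multiplication by $q$. For $S\subseteq g(\Gamma)$, the set $q\cdot S$ from the statement is therefore exactly $F(S)$, whether we read $S$ as a subset of $\C$ or of $\Pro^1(\C)$ with the convention $q\cdot\infty=\infty$. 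In either reading, the hypotheses $S\subseteq g(\Gamma)$ and $q\cdot S\subseteq g(\Gamma)$ become $S\subseteq g(\Gamma)$ and $F(S)\subseteq g(\Gamma)$, which are precisely the hypotheses of Theorem \ref{ThmMobiusV1}.

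The one step needing an argument is that $F$ has infinite order in $\PGL_2(\C)$. We have $F^m(t)=q^m t$. If $F^m$ were the identity for some $m\ge 1$, then, since a matrix represents the identity in $\PGL_2$ only when it is scalar, $\mathrm{diag}(q^m,1)$ would be a scalar matrix. That forces $q^m=1$, contradicting the assumption that $q$ is not a root of unity. So $F$ has infinite order.

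With this, Theorem \ref{ThmMobiusV1} applies and gives $\#S\le \kappa_1(d)^{1+\rk\Gamma}$ with the same constant $\kappa_1(d)$, which is the claim. There is no real obstacle here. The substantive inputs are already contained in Theorem \ref{ThmMobiusV1}: the branch-value argument showing that an infinite-order M\"obius map cannot permute the at least three branch values of $g$, and, behind that, Theorem \ref{ThmFiniteRankInter}.
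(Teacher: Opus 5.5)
Your proposal is correct and matches the paper's argument. The paper obtains this corollary by applying Theorem \ref{ThmMobiusV1} with $F(t)=qt$, and this $F$ has infinite order in $\PGL_2(\C)$ precisely because $q$ is not a root of unity, as you verify.
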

As it might be of particular interest, at this point we insist that our results are valid in the special cases $\Gamma = E(\C)_{\rm tor}$ and when $\Gamma$ is the Mordell--Weil group of an elliptic curve over a number field. The latter case over $\Q$ precisely gives the theorems stated in Section \ref{SecIntroShifts}.


\section{Acknowledgments}

N.G.-F. was supported by ANID Fondecyt Regular grant 1251300 from Chile. H.P. was supported by ANID Fondecyt Regular grant 1230507 from Chile. 



\begin{thebibliography}{9}         


\bibitem{BF} F. Bogomolov, H. Fu, \emph{Division polynomials and intersection of projective torsion points}. Eur. J. Math. 2(3) (2016), 644-660.

\bibitem{BFT} F. Bogomolov, H. Fu, Y. Tschinkel, \emph{Torsion of elliptic curves and unlikely intersections}.
in: Geometry and Physics, Vol. I (eds. J. E. Andersen, A. Dancer and O. Garc\'ia-Prada) (Oxford University Press, Oxford, 2018), 19-37.

\bibitem{BT} F. Bogomolov, Y. Tschinkel, \emph{Algebraic varieties over small fields}. in: Diophantine Geometry, CRM Series, 4 (ed. U. Zannier) (Scuola Normale Superiore di Pisa, Pisa, 2007), 73-91.

\bibitem{Bremner} A. Bremner, \emph{On arithmetic progressions on elliptic curves}. Experimental Mathematics, 1999, vol. 8, no 4, p. 409-413.

\bibitem{BST} A. Bremner, J. Silverman, N. Tzanakis, \emph{Integral points in arithmetic progression on $y^2=x(x^2-n^2)$}. Journal of Number Theory, (2000) 80(2), 187-208.

\bibitem{BremnerUlas} A. Bremner, M. Ulas, \emph{Rational points in geometric progressions on certain hyperelliptic curves}. Publ. Math. Debrecen 82.3--4 (2013): 669-683.

\bibitem{CissMoody} A. Ciss, D. Moody, \emph{Geometric progressions on elliptic curves}. Glasnik matematicki 52.1 (2017): 1-10.

\bibitem{DMKY} L. DeMarco, H. Krieger, H. Ye, \emph{Uniform Manin-Mumford for a family of genus 2 curves}. Ann. of Math. (2) 191 (2020), no. 3, 949-1001.

\bibitem{FuStoll} H. Fu, M. Stoll, \emph{Elliptic curves with common torsion $x$-coordinates and hyperelliptic torsion packets}. Proc. Amer. Math. Soc. 150 (2022), no. 12, 5137-5149.

\bibitem{GFP} N. Garcia-Fritz, H. Pasten, \emph{Elliptic curves with long arithmetic progressions have large rank}. Int. Math. Res. Not. IMRN 2021, no. 10, 7394-7432.

\bibitem{GFPinter} N. Garcia-Fritz, H. Pasten, \emph{Intersecting the torsion of elliptic curves}. Bull. Aust. Math. Soc. 110 (2024), no. 1, 56-63.

\bibitem{GFPunif} N. Garcia-Fritz, H. Pasten, \emph{A note on Bremner's conjecture and uniformity}. Preprint (2026) arXiv:2604.04850

\bibitem{GGK} Z. Gao, T. Ge, L. K\"uhne, \emph{The Uniform Mordell-Lang Conjecture}. (2021) to appear in Publ. Math. IHES.

\bibitem{HMS} J. Harrison, A. Mudgal, H. Schmidt, \emph{Uniform sum-product phenomenon for algebraic groups and Bremner's conjecture}. Preprint (2026) arXiv:2603.06483

\bibitem{LMFDB} The LMFDB Collaboration, \emph{The $L$-functions and modular forms database}. \url{https://www.lmfdb.org} (2026).

\bibitem{Remond1} G. R\'emond, \emph{D\'ecompte dans une conjecture de Lang}. Inventiones Mathematicae, (2000) 142 (3), 513-545.

\bibitem{Remond2} G. R\'emond, \emph{Sur les sous-vari\'et\'es des tores}. Compositio Mathematica 134.3 (2002) 337-366.

\end{thebibliography}
\end{document}